\documentclass[a4paper,10pt]{amsart}

\linespread{1}

%inizio pacchetti
\usepackage[utf8]{inputenc}
\usepackage[english]{babel}
\usepackage[T1]{fontenc}
\usepackage{amsfonts}
\usepackage{amsmath}
\usepackage{amsthm}
\usepackage{amssymb}
\usepackage{hyperref}
\usepackage{syntonly}
\usepackage{mathtools}
\usepackage{algorithm}
\usepackage{algpseudocode}
\usepackage{tikz}
\usepackage{dsfont}
\usepackage{braket}
\usepackage{faktor}
\usepackage{array}
\usepackage{lscape}
\usepackage{rotating}
\usepackage{epigraph}
\usepackage{subfig}
\usepackage{indentfirst}
\usepackage{booktabs}
\usepackage{multirow}
\usepackage{caption}
\usepackage{tabu}
\usepackage{tabularx}
\usepackage{xcolor}
\usepackage{pgf}
\usepackage{tikz}
\usepackage{caption}
\usepackage{color}
\usepackage{enumitem}
\usepackage{mathtools}
\usepackage{float}
\usepackage{graphicx}
\usepackage[displaymath, tightpage]{preview}
\usepackage[toc,page]{appendix}

%generatore bibliografia Harvard stile
%\usepackage{csquotes}
%\usepackage[style=authoryear-ibid,backend=biber]{biblatex}
%\addbibresource{Full_revised.bib}% Syntax for version >= 1.2

%pacchetti Tikz
\usetikzlibrary{matrix}
\usetikzlibrary{arrows}
\hypersetup{
    colorlinks,
    linkcolor={red!50!black},
    citecolor={blue!50!black},
    urlcolor={blue!80!black}
}

%pacchetto listoftodos
\usepackage[colorinlistoftodos, textwidth=4cm, shadow,disable]{todonotes}

%nomenclatura teoremi

\theoremstyle{plain}
\newtheorem{theorem}{Theorem}[section]

\newtheorem{corollary}{Corollary}[section]

\theoremstyle{definition}

\theoremstyle{remark}

\newtheorem{note}{Note}[section]

%inizio documento
%inizio documento
\begin{document}

%opening
\title[Centering toric arrangements of maximal rank]{Centering toric arrangements of maximal rank}

\author[Elia Saini]{Elia Saini}
\address{(Elia Saini) Independent researcher, Via San Rocco 40/A, IT-23036, Teglio (SO).}
\email{saini.elia@outlook.com}
\subjclass[2010]{05E99, 15A21, 54F65}
\keywords{Toric arrangements, minimal topological spaces, torsion-free cohomology}

%inserisco abstract
\begin{abstract}
The homotopy type of the complement manifold of a complexified toric arrangement has been investigated by d'Antonio and Delucchi in a paper that shows the minimality of such topological space. In this work we associate to a given toric arrangement a matrix that represents the arrangement over the integers. Then, we consider the family of toric arrangements for which this matrix has maximal rank.
Our goal is to prove, by means of basic linear algebra arguments, that the complement manifold of the toric arrangements that belong to this family is diffeomorphic to that of centered toric arrangements and thus it is a minimal topological space, too.
\end{abstract}

%comando di compilazione
\maketitle

%inizio lavoro
 \section*{Introduction}

Toric arrangements are a well studied class of hypersurface arrangements that appear in many branches of mathematics such as algebraic geometry and topology as well as combinatorics. Following the seminal works of Lefschetz \cite{L24} and Deligne \cite{D71}, toric arrangements provide a generalization of hyperplane arrangements in the context of the study of the complements of normal crossing divisors in smooth projective varieties. A very rich area of research focuses on the description of the combinatorial and homotopic invariants of these arrangements.

The Betti numbers of the complement manifold of a toric arrangement were first computed by Looijenga in \cite{L93} using several facts from sheaf theory. Afterwards, a presentation of the complex cohomology ring has been given in \cite{DP10}.

Several authors such as Br\"{a}nd\'en, D'Adderio, Lenz and Moci explored in the series of papers
\cite{BM14}, \cite{DM13}, \cite{L17} and \cite{M12} the very rich combinatorial structure of toric arrangements and their deep relationship with arithmetic matroids. The central question of describing the homotopy type of the complement manifold of a toric arrangements was first teackled by Moci and Settepanella in \cite{MS11}. Later, d'Antonio and Delucchi in the works \cite{dD12} and \cite{dD15} exploited  the toric complex introduced in \cite{MS11}
to describe the fundamental group and
to prove a minimality result for those toric arrangements that restricts to a real arrangement. A direct consequence of this minimality result is the torsion-freeness of the integer cohomology of this class of arrangements.

In the works of Randell \cite{Ran02} and Dimca and Papadima \cite{DP03} it is proved that the complement manifold of a hyperplane arrangement is always minimal. A similar result does not hold in the context of arbitrary hypersurfaces. Indeed, it is enough to consider the complement of the plane cusp as shown in \cite{Ran02}. Thus, the interest in studying the homotopy type of the complement manifold of a toric arrangement arises in the quest for a better understanding of those hypersurface arrangements for which these minimality results hold.

Finally, improving on the study of the toric complex and the poset of layers, Callegaro and Delucchi exhibited in \cite{CD17}
a presentation of the integer cohomology of the complement manifold of centered toric arrangements. In order to describe those algebraic and topological invariants that are combinatorially determined, Pagaria provided in \cite{Pag19} an example of two toric arrangements with the same poset of layers but with different integer cohomologies, while De Concini and Gaiffi showed in \cite{DCG21} that the rational cohomology ring of toric arrangements is combinatorial.

In this work we provide a description of a wider class of toric arrangements for which the minimality results previously described hold. This class consists of those toric arrangements that have an associated matrix of maximal rank. Using techniques from basic linear algebra, we are going to build a sequence of diffeomorphic changes of coordinates that transform these arrengements into centered ones, enabling us to apply the results on complexified arrangements developed by d'Antonio and Delucchi in \cite{dD15}. In particular, this implies that the integer cohomology of any of these toric arrangements is torsion-free.
As far as we know, examples of toric arrangements with non-minimal complement manifold are not available in literature.

  %inserire riassunto
    \smallskip
   \noindent \textbf{Overview.} Section \ref{Section1} provides some basic definitions and results on toric arrangements as well as complexified and centered arrangements, while Section \ref{Section2} is devoted to the proof of our result on toric arrangements that have an associated matrix of maximal rank.

\section{Basics}\label{Section1}
The purpose of this section is to recall some basic definitions and results about toric arrangements and complexified arrangements, in order to fix notations and set up the frame for the subsequent part of this work.
For a general theory of toric arrangements we refer to the paper \cite{DP10} and the survey \cite{DP11}, while for a detailed study of the homotopy type of the complement manifold of complexified toric arrangements we point to the article \cite{dD15}.

Moreover, to better understand the inner and surprising connections between toric arrangements, partition functions and box splines we point out the work of De Concini and Procesi \cite{DP11} where all these topics are summarized.

\subsection{Toric arrangements}\label{TA}
A \textit{toric arrangement} in $(\mathbb{C}^{\ast})^{n}$ is a finite collection $\mathcal{A}=\{H_{1},\ldots,H_{m}\}$ of subspaces of
$(\mathbb{C}^{\ast})^{n}$, called \textit{subtori}, of the form
\begin{equation*}
 H_{i}=\left\lbrace
        (z_{1},\ldots,z_{n})\in(\mathbb{C}^{\ast})^{n}
        \left|
         z_{1}^{p_{i,1}}\cdots z_{n}^{p_{i,n}}=\alpha_{i}
        \right.
       \right\rbrace
\end{equation*}
\noindent where $p_{i,j}\in\mathbb{Z}$ and $\alpha_{i}\in\mathbb{C}^{\ast}$ as $1\leq i\leq m$ and $1\leq j\leq n$. The \textit{complement manifold} $M(\mathcal{A})$ is the complement of the union of the subtori $H_{i}$ in $(\mathbb{C}^{\ast})^{n}$, that is, the topological space
\begin{equation*}
 M(\mathcal{A})=(\mathbb{C}^{\ast})^{n}\setminus\bigcup_{i=1}^{m}H_{i}
\end{equation*}
\noindent The matrix \textit{associated} to the toric arrangement $\mathcal{A}$ is the integer matrix  of $m$ rows and $n$ columns $\tilde{M}_{\mathcal{A}}\in M_{m,n}(\mathbb{Z})$ defined by setting
\begin{equation*}
 \tilde{M}_{\mathcal{A}}=\left(
                  \begin{array}{ccc}
                   p_{1,1} & \cdots & p_{1,n} \\
                   \vdots  & \vdots & \vdots  \\
                   p_{m,1} & \cdots & p_{m,n} \\
                  \end{array}
                 \right)
\end{equation*}

\begin{note}\label{UNIMOD} A square integer matrix is \textit{unimodular} if its determinant equals $1$ or $-1$. In this case, a basic result in linear algebra ensures us that the inverse matrix still has integer entries.
\end{note}

\subsection{Complexified arrangements}\label{CX}
A toric arrangement $\mathcal{A}=\{H_{1},\ldots,H_{m}\}$ in $(\mathbb{C}^{\ast})^{n}$ is \textit{complexified} if it restricts to a real arrangement, that is, if $\alpha_{i}\in S^{1}$ for all $i=1,\ldots,m$ while it is \textit{centered} if all the $\alpha_{i}\text{'s}$ equal $1$. Clearly a centered toric arrangement is complexified, too.
A topological space $X$ is \textit{minimal} if it is homotopically equivalent to a CW complex with exactly $b_{k}$ cells in dimension $k$, where $b_{k}$ denotes the $k\text{-th}$ Betti number of $X$. The minimality of the complement manifold of a complexified toric arrangement has been proved by d'Antonio and Delucchi in the following theorem, that is the basement on which our work is funded.

\begin{theorem}[\cite{dD15}, Corollay 6.10]\label{BASEMENT}
 Let $\mathcal{A}=\{H_{1},\ldots,H_{m}\}$ be a toric arrangement in
 $(\mathbb{C}^{\ast})^{n}$. If $\mathcal{A}$ is complexified, then the complement manifold $M(\mathcal{A})$ is a minimal topological space.
\end{theorem}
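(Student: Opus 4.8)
Since this is Corollary~93 of d'Antonio and Delucchi, the strategy I would follow is essentially theirs: replace $M(\mathcal{A})$ by an explicit finite CW model and run discrete Morse theory on it, arranging the Morse matching so that exactly $b_{k}$ critical cells survive in each dimension $k$. To produce the model, use $\mathbb{C}^{\ast}\cong\mathbb{R}\times S^{1}$, so that $(\mathbb{C}^{\ast})^{n}\cong\mathbb{R}^{n}\times(S^{1})^{n}$; the hypothesis $\alpha_{i}\in S^{1}$ guarantees that each $H_{i}$ splits as (a linear subspace of the $\mathbb{R}^{n}$-factor) times (a union of parallel codimension-one subtori of $(S^{1})^{n}$). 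Lifting the angular coordinates along the universal abelian cover $\mathbb{R}^{n}\to(S^{1})^{n}$ turns the real locus of $\mathcal{A}$ into a locally finite affine hyperplane arrangement $\widehat{\mathcal{A}}$ in $\mathbb{R}^{n}$ invariant under the translation action of $\mathbb{Z}^{n}$. Applying the Salvetti construction to $\widehat{\mathcal{A}}$ and quotienting by $\mathbb{Z}^{n}$ yields a finite regular CW complex $\mathrm{Sal}(\mathcal{A})$ — the toric Salvetti complex of Moci and Settepanella — together with a deformation retraction of $M(\mathcal{A})$ onto it; its cells are indexed by pairs $(F,C)$, with $F$ a face of the induced periodic polyhedral decomposition and $C$ a chamber in the star of $F$, taken modulo $\mathbb{Z}^{n}$, and the cell attached to $(F,C)$ has dimension $n-\dim F$.

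The second step is to build an acyclic matching on the face poset of $\mathrm{Sal}(\mathcal{A})$. The template is the polar-ordering matching used by Salvetti and Settepanella for complements of real hyperplane arrangements: fix a generic flag of affine subspaces and, processing cells in the order induced by the flag, match each non-critical cell with its first available coface. Two features are new in the toric setting: the arrangement is $\mathbb{Z}^{n}$-periodic, so the matching must be equivariant and descend to the quotient, and the ambient torus is not contractible, so the matching cannot collapse $\mathrm{Sal}(\mathcal{A})$ to a point — besides the critical cells produced by the local hyperplane-arrangement data at each layer, there must remain critical cells accounting for $H^{\ast}((S^{1})^{n};\mathbb{Z})$. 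Concretely I would stratify $\mathrm{Sal}(\mathcal{A})$ along the poset of layers (connected components of intersections of the $H_{i}$), build the matching layer by layer from the local polar orderings, and then verify acyclicity of the assembled matching on the whole complex, e.g.\ by exhibiting a poset map onto the poset of layers whose fibres carry acyclic matchings and invoking the Cluster (Patchwork) Lemma.

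The final step is to check minimality of the resulting Morse complex, i.e.\ that the number $c_{k}$ of critical $k$-cells equals $b_{k}(M(\mathcal{A}))$. Discrete Morse theory gives a homotopy equivalence of $M(\mathcal{A})$ with a CW complex having $c_{k}$ cells in dimension $k$, hence the coefficientwise inequality $\sum_{k}c_{k}t^{k}\ge\sum_{k}b_{k}(M(\mathcal{A}))\,t^{k}$; equality follows by comparison with the Poincaré polynomial of $M(\mathcal{A})$ computed by Looijenga and by De Concini and Procesi, which decomposes as a sum of local contributions indexed by the poset of layers. One shows the matching was designed so that, layer by layer, the surviving critical cells reproduce exactly those summands, and since $\mathcal{A}$ is complexified the relevant local arrangements are real, so the polar-ordering count of critical cells is exact and no cancellation occurs; whence $c_{k}=b_{k}(M(\mathcal{A}))$ for all $k$.

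I expect the middle step to be the main obstacle. On $(S^{1})^{n}$ there is no global generic linear functional, so the polar ordering must be organized on the universal cover compatibly with the $\mathbb{Z}^{n}$-action and then pushed down, and the bookkeeping of which cells are critical is substantially more delicate than in the classical affine case; proving simultaneously that the assembled matching is acyclic and that it is perfect relative to the torus is where the real work lies.
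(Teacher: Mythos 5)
The paper you are reading does not prove this statement at all: it is imported verbatim as Corollary~93 of \cite{dD15} and used as a black box (the author's own contribution, Theorem~\ref{main}, is a change-of-coordinates reduction that feeds \emph{into} this result). So there is no internal proof to compare your attempt against; the only meaningful comparison is with the argument in \cite{dD15} itself.

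Measured against that reference, your outline reconstructs the correct architecture: the deformation retraction of $M(\mathcal{A})$ onto the toric Salvetti complex of Moci and Settepanella (which is where the complexified hypothesis enters, since one needs the arrangement to restrict to a real, $\mathbb{Z}^{n}$-periodic affine arrangement on the universal cover), the construction of an acyclic matching assembled from local data via the patchwork lemma along the poset of layers, and the final count of critical cells against the Poincar\'e polynomial of Looijenga and De Concini--Procesi. But what you have written is a plan, not a proof. The step you yourself flag as the main obstacle --- producing a globally acyclic matching from polar orderings that must be organized equivariantly on the cover and pushed down, and then verifying that it is perfect --- is exactly the technical content of \cite{dD15}, and you do not supply it. In particular, the claim that ``the polar-ordering count of critical cells is exact and no cancellation occurs'' is asserted rather than argued; establishing it requires the detailed combinatorics of the toric Salvetti complex that occupies most of that paper. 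As a standalone proof your submission is therefore incomplete, though as a summary of the cited argument it is accurate.
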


\section{Results}\label{Section2}
The aim of this section is to prove the minimality of the complement manifold of a toric arrangement with associated matrix that has maximal rank. Our goal is to extend to this wider class of toric arrangements the ideas described in \cite[Section 7.2]{CD17} also presented by Pagaria in a series of conference talks. To do this we are going to perform a sequence of changes of coordinates that transform the given toric arrangement into a centered one (compare Section \ref{CX}) so that we are allowed to apply Theorem \ref{BASEMENT}.

\begin{theorem}\label{main}
 Let $n\geq m\geq1$ and let $\mathcal{A}=\{H_{1},\ldots,H_{m}\}$ be a toric arrangement in $(\mathbb{C}^{\ast})^{n}$ with associated matrix
 $\tilde{M}_{\mathcal{A}}$. If $\tilde{M}_{\mathcal{A}}$ has rank $m$, then there exists a centered toric arrangement $\mathcal{B}=\{T_{1},\ldots,T_{m}\}$ in $(\mathbb{C}^{\ast})^{n}$ such that the complement manifolds $M(\mathcal{A})$ and $M(\mathcal{B})$ are diffeomorphic.
\end{theorem}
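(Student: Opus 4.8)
\medskip
\noindent\emph{Sketch of an approach.}
The plan is to construct a diffeomorphism of the ambient torus $(\mathbb{C}^{\ast})^{n}$ carrying $\mathcal{A}$ onto a centered arrangement $\mathcal{B}$; since such a map sends the union of the subtori of $\mathcal{A}$ onto the union of the subtori of $\mathcal{B}$ and is smooth with smooth inverse, it restricts to a diffeomorphism $M(\mathcal{A})\to M(\mathcal{B})$. I would build this map as the composition of two elementary coordinate changes on $(\mathbb{C}^{\ast})^{n}$, each of them a diffeomorphism. The first is a monomial automorphism: given a unimodular $Q\in M_{n,n}(\mathbb{Z})$, the automorphism whose inverse sends $z$ to the tuple with $j$-th entry $\prod_{k}z_{k}^{q_{j,k}}$ (it is invertible over $\mathbb{Z}$ by Note \ref{UNIMOD}); under it the scalars $\alpha_{i}$ are unchanged while the associated matrix $\tilde{M}_{\mathcal{A}}$ gets replaced by $\tilde{M}_{\mathcal{A}}Q$. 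The second is a translation $z\mapsto(\beta_{1}z_{1},\ldots,\beta_{n}z_{n})$ for $\beta\in(\mathbb{C}^{\ast})^{n}$; under it the associated matrix is unchanged while each $\alpha_{i}$ gets multiplied by $\prod_{j}\beta_{j}^{p_{i,j}}$.

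First I would use a monomial automorphism to normalize the matrix. Since $\tilde{M}_{\mathcal{A}}$ has rank $m$, its kernel as a homomorphism $\mathbb{Z}^{n}\to\mathbb{Z}^{m}$ is a primitive sublattice of rank $n-m$, so integer column operations — that is, right multiplication by a suitable unimodular $Q$ — bring it to the form $\tilde{M}_{\mathcal{A}}Q=(\,C\mid 0\,)$ with $C\in M_{m,m}(\mathbb{Z})$, $\det C\neq 0$, and the last $n-m$ columns zero. After applying the corresponding monomial automorphism I may therefore assume the defining equations of the arrangement involve only $z_{1},\ldots,z_{m}$ and read $z_{1}^{c_{i,1}}\cdots z_{m}^{c_{i,m}}=\alpha_{i}$ for $i=1,\ldots,m$, where $c_{i}$ is the $i$-th row of $C$ and the $\alpha_{i}$ are unchanged.

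Next I would choose a translation absorbing the scalars. It suffices to find $\gamma=(\gamma_{1},\ldots,\gamma_{m})\in(\mathbb{C}^{\ast})^{m}$ with $\gamma_{1}^{c_{i,1}}\cdots\gamma_{m}^{c_{i,m}}=\alpha_{i}^{-1}$ for every $i$: then, with $\beta=(\gamma_{1},\ldots,\gamma_{m},1,\ldots,1)$, the translation $z\mapsto(\beta_{1}z_{1},\ldots,\beta_{n}z_{n})$ turns each equation $z_{1}^{c_{i,1}}\cdots z_{m}^{c_{i,m}}=\alpha_{i}$ into $w_{1}^{c_{i,1}}\cdots w_{m}^{c_{i,m}}=1$. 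Such a $\gamma$ exists exactly when the group homomorphism $(\mathbb{C}^{\ast})^{m}\to(\mathbb{C}^{\ast})^{m}$ determined by $C$ is surjective, and this is the only place the maximal-rank hypothesis is used; it is also the step I expect to be the main difficulty, even though the argument for it is short. From the identity $C\cdot\mathrm{adj}(C)=(\det C)I_{m}$ one sees that this homomorphism, composed with the one determined by $\mathrm{adj}(C)$, is the coordinatewise $(\det C)$-th power map on $(\mathbb{C}^{\ast})^{m}$, which is onto because $\mathbb{C}^{\ast}$ is a divisible group; hence the homomorphism determined by $C$ is onto as well. (One could also invoke the Smith normal form of $C$.)

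Finally I would assemble the pieces. Put $T_{i}=\{w\in(\mathbb{C}^{\ast})^{n}:w_{1}^{c_{i,1}}\cdots w_{m}^{c_{i,m}}=1\}$ and $\mathcal{B}=\{T_{1},\ldots,T_{m}\}$, a centered toric arrangement in $(\mathbb{C}^{\ast})^{n}$; the composition of the monomial automorphism of the first step with the translation of the second is a diffeomorphism of $(\mathbb{C}^{\ast})^{n}$ sending each $H_{i}$ onto $T_{i}$, hence sending $\bigcup_{i}H_{i}$ onto $\bigcup_{i}T_{i}$ and restricting to a diffeomorphism $M(\mathcal{A})\to M(\mathcal{B})$. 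Combined with Theorem \ref{BASEMENT} this also shows that $M(\mathcal{A})$ is minimal. Apart from the surjectivity above, what remains is routine bookkeeping: checking that monomial automorphisms and translations are diffeomorphisms of $(\mathbb{C}^{\ast})^{n}$, tracking precisely how each acts on the pair consisting of the associated matrix and the tuple of scalars, and verifying that the resulting map genuinely identifies $M(\mathcal{A})$ with $M(\mathcal{B})$. (Alternatively one can proceed by induction on $m$, centering one subtorus at a time while fixing the previously centered ones, at the cost of the same rank input.)
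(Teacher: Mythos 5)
Your argument is correct and follows the same two-stage strategy as the paper --- a monomial automorphism of $(\mathbb{C}^{\ast})^{n}$ to normalize the exponent matrix, followed by a multiplicative translation to absorb the scalars --- but the execution of both stages is genuinely different. The paper only permutes coordinates so that the first $m$ columns of $\tilde{M}_{\mathcal{A}}$ form an invertible minor $M_{\mathcal{A}}$, and then right-multiplies by an $m\times m$ unimodular matrix $H_{\mathcal{A}}$ so that $M_{\mathcal{A}}H_{\mathcal{A}}$ is upper triangular (Hermite normal form, via \cite{New72}); the variables $z_{m+1},\ldots,z_{n}$ keep their original exponents throughout. You instead take a full $n\times n$ unimodular $Q$ to reach the form $(\,C\mid 0\,)$, which is slightly stronger than needed but equally valid. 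More substantively, the paper centers the arrangement by a recursion: it rescales $t_{m},t_{m-1},\ldots,t_{1}$ one coordinate at a time, at each stage extracting a $d_{i}$-th root of the current scalar and using the upper-triangular structure to guarantee that centering $K_{i}$ does not disturb the already-centered $K_{i+1},\ldots,K_{m}$. You replace that entire recursion with a single surjectivity statement --- the monomial homomorphism $(\mathbb{C}^{\ast})^{m}\to(\mathbb{C}^{\ast})^{m}$ attached to $C$ is onto because composing with the one attached to $\mathrm{adj}(C)$ gives the coordinatewise $(\det C)$-th power map and $\mathbb{C}^{\ast}$ is divisible --- which is shorter, makes the role of the maximal-rank hypothesis more transparent, and avoids tracking the intermediate scalars $\alpha_{(k)i}$, at the cost of being less explicit about the actual change of coordinates. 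Both routes are complete proofs; yours trades the paper's elementary but lengthy bookkeeping for one conceptual lemma.
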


Thus, Theorem \ref{BASEMENT} implies that the complement manifold of these toric arrangements is a minimal topological space.

\begin{corollary}
 Let $n\geq m\geq1$ and let $\mathcal{A}=\{H_{1},\ldots,H_{m}\}$ be a toric arrangement in $(\mathbb{C}^{\ast})^{n}$ with associated matrix
 $\tilde{M}_{\mathcal{A}}$. If $\tilde{M}_{\mathcal{A}}$ has rank $m$, then the
 complement manifold $M(\mathcal{A})$ is a minimal topological space.
\end{corollary}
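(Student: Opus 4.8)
The plan is to realize the hypothesis $\operatorname{rank}\tilde M_{\mathcal A}=m$ as a statement about the rows $p_i=(p_{i,1},\dots,p_{i,n})\in\mathbb Z^n$ being linearly independent over $\mathbb Q$, and then to exhibit a change of coordinates on $(\mathbb C^\ast)^n$ — i.e.\ an automorphism of the torus coming from an element of $\mathrm{GL}_n(\mathbb Z)$ — that simultaneously (i) keeps all defining equations monomial with integer exponents, so that the image is again a genuine toric arrangement, and (ii) after a further rescaling of the individual coordinates, kills all the constants $\alpha_i$. First I would recall that a matrix $U\in\mathrm{GL}_n(\mathbb Z)$ induces a diffeomorphism $\varphi_U\colon(\mathbb C^\ast)^n\to(\mathbb C^\ast)^n$ (a monomial map with monomial inverse, because $U^{-1}$ is again integral by Note~\ref{UNIMOD}), and that under such a map the subtorus with exponent row $p_i$ is carried to the subtorus with exponent row $p_iU^{-1}$ and the \emph{same} constant $\alpha_i$. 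So the whole game is to choose $U$ making the new exponent matrix as simple as possible.

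The key step is a Smith-normal-form / row-echelon argument over $\mathbb Z$. Since $\tilde M_{\mathcal A}$ has rank $m$ with $m\le n$, after permuting columns (itself a unimodular operation) the first $m$ columns are $\mathbb Q$-independent. I would then put $\tilde M_{\mathcal A}$ into Smith normal form: there exist $P\in\mathrm{GL}_m(\mathbb Z)$ and $Q\in\mathrm{GL}_n(\mathbb Z)$ with $P\,\tilde M_{\mathcal A}\,Q=\bigl(D\mid 0\bigr)$ where $D=\operatorname{diag}(d_1,\dots,d_m)$ with $d_k\ge1$. Multiplying on the right by $Q$ is the coordinate change $\varphi_{Q^{-1}}$ and is harmless; multiplying on the left by $P$ replaces the system $\{z^{p_i}=\alpha_i\}$ by an equivalent system $\{z^{p'_i}=\alpha'_i\}$ cutting out exactly the same subtori, where $\alpha'_i=\prod_j\alpha_j^{P_{ij}}\in\mathbb C^\ast$ (here one uses that each $H_i$ is genuinely a subtorus, so simultaneously the equations define the same locus — this is where I must be a little careful, since replacing a single equation $z^{p}=\alpha$ by $z^{p}=\alpha$ raised to a power can enlarge the locus; the correct statement is that the \emph{lattice} spanned by the rows, together with the character-value data, is what matters, and $P\in\mathrm{GL}_m(\mathbb Z)$ preserves it). After these reductions the $i$-th subtorus is $\{z_i^{d_i}=\alpha'_i\}$ in the new coordinates.

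Finally I would center the arrangement by rescaling coordinates. Pick any $d_i$-th root $\beta_i\in\mathbb C^\ast$ of $\alpha'_i$ for $1\le i\le m$, set $\beta_i=1$ for $m<i\le n$, and apply the diffeomorphism $(z_1,\dots,z_n)\mapsto(\beta_1 z_1,\dots,\beta_n z_n)$ of $(\mathbb C^\ast)^n$. This sends $\{z_i^{d_i}=\alpha'_i\}$ to $\{z_i^{d_i}=1\}$, so the resulting arrangement $\mathcal B=\{T_1,\dots,T_m\}$ with $T_i=\{z_i^{d_i}=1\}$ is centered, and the composite of all the maps above is a diffeomorphism $(\mathbb C^\ast)^n\to(\mathbb C^\ast)^n$ carrying $\bigcup H_i$ onto $\bigcup T_i$, hence restricting to a diffeomorphism $M(\mathcal A)\xrightarrow{\ \sim\ }M(\mathcal B)$. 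Applying Theorem~\ref{BASEMENT} to the centered — hence complexified — arrangement $\mathcal B$ gives that $M(\mathcal B)$, and therefore the diffeomorphic manifold $M(\mathcal A)$, is minimal.

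I expect the main obstacle to be the bookkeeping in the middle step: one must make sure that left-multiplication by $P\in\mathrm{GL}_m(\mathbb Z)$ really does preserve each individual hypersurface (not just the union or the final intersection), which forces the argument to be phrased at the level of the sublattices of the character lattice cut out by the $p_i$ together with their associated points on $\mathbb C^\ast$, rather than naively manipulating one equation at a time; handling the torsion divisors $d_i>1$ correctly is the delicate point, and is precisely why the hypothesis is "maximal rank" rather than "unimodular."
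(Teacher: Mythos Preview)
Your overall strategy---transform $\mathcal A$ into a centered arrangement via torus automorphisms and rescalings, then invoke Theorem~\ref{BASEMENT}---is exactly the paper's. The gap is the one you flag yourself but do not resolve: left-multiplication by $P\in\mathrm{GL}_m(\mathbb Z)$ does \emph{not} preserve the individual hypersurfaces, nor even their union. Take $H_1=\{z_1=2\}$, $H_2=\{z_2=3\}$ in $(\mathbb C^\ast)^2$ and $P=\bigl(\begin{smallmatrix}1&1\\0&1\end{smallmatrix}\bigr)$; the new first equation $z_1z_2=6$ cuts out a curve different from $H_1$, and the two unions (hence the two complements) differ. Row operations mix the characters defining \emph{distinct} subtori, so the Smith-normal-form route, which requires both row and column operations, fails as stated. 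This is not bookkeeping that can be patched ``at the level of sublattices with character-value data'': the arrangement is a list of individual hypersurfaces, and row operations genuinely change that list.

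The paper sidesteps this by using \emph{only} column operations---a bona fide coordinate change on $(\mathbb C^\ast)^n$---to bring the leading $m\times m$ block into upper-triangular (Hermite) form rather than diagonal form. Each $H_i$ is then carried to a hypersurface whose equation involves $t_i,t_{i+1},\dots,t_n$ with leading factor $t_i^{d_i}$. Because the equations are not decoupled, the centering is done iteratively: first rescale $t_m$ by a $d_m$-th root of $\alpha_m$ to center the last equation (this perturbs only $\alpha_1,\dots,\alpha_{m-1}$), then rescale $t_{m-1}$ to center the $(m{-}1)$-st, and so on down to $t_1$; the triangular structure guarantees each step leaves the already-centered equations untouched. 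Your one-shot rescaling works only after the illegitimate diagonalization; replacing Smith by Hermite and your single rescaling by this back-substitution repairs the argument.
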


Moreover, with the same arguments of \cite[Corollary 14]{dD15} it follows that the complement manifold of these toric arrangements is torsion-free, too.

\begin{corollary}
  Let $n\geq m\geq1$ and let $\mathcal{A}=\{H_{1},\ldots,H_{m}\}$ be a toric arrangement in $(\mathbb{C}^{\ast})^{n}$ with associated matrix
  $\tilde{M}_{\mathcal{A}}$. If $\tilde{M}_{\mathcal{A}}$ has rank $m$, then the cohomology groups $H^{k}(M(\mathcal{A}),\mathbb{Z})$ are torsion-free.
\end{corollary}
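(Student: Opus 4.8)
The plan is to obtain torsion-freeness directly from the minimality already available for this class, following the argument of \cite[Corollary 14]{dD15}. By Theorem \ref{main} the complement manifold $M(\mathcal{A})$ is diffeomorphic to the complement $M(\mathcal{B})$ of a centered, hence complexified, toric arrangement $\mathcal{B}$, so Theorem \ref{BASEMENT} applies and $M(\mathcal{A})$ is a minimal topological space. Since integer cohomology is a homotopy invariant, it suffices to show that the integer cohomology of any minimal space is torsion-free, and I would carry this out by exhibiting vanishing coboundary operators in a minimal cellular model.

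First I would fix a minimal CW model $W$ homotopy equivalent to $M(\mathcal{A})$, that is, a CW complex having exactly $b_{k}$ cells in dimension $k$, where $b_{k}$ denotes the $k$-th Betti number. Its cellular cochain complex consists of the free abelian groups $C^{k}\cong\mathbb{Z}^{b_{k}}$ together with the coboundary operators $\delta^{k}\colon C^{k}\to C^{k+1}$, and it computes the integer cohomology $H^{k}(W;\mathbb{Z})$.

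The key step is a rank count performed over $\mathbb{Q}$. By definition of the Betti numbers one has $\dim_{\mathbb{Q}}(C^{k}\otimes\mathbb{Q})=b_{k}=\dim_{\mathbb{Q}}H^{k}(W;\mathbb{Q})$, so applying rank-nullity to the rationalized maps $\delta^{k}$ yields
\[
b_{k}=\dim_{\mathbb{Q}}H^{k}(W;\mathbb{Q})=b_{k}-\operatorname{rank}\delta^{k-1}-\operatorname{rank}\delta^{k},
\]
whence $\operatorname{rank}\delta^{k-1}+\operatorname{rank}\delta^{k}=0$ for every $k$. As ranks are non-negative, all rational coboundary operators vanish; since an integer matrix that becomes zero after tensoring with $\mathbb{Q}$ is already the zero matrix, each $\delta^{k}$ vanishes over $\mathbb{Z}$ as well. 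With all differentials zero the complex computes $H^{k}(W;\mathbb{Z})\cong C^{k}\cong\mathbb{Z}^{b_{k}}$, which is free and in particular torsion-free; transporting along the homotopy equivalence gives that $H^{k}(M(\mathcal{A});\mathbb{Z})$ is torsion-free.

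I do not expect a genuine obstacle here, since the whole argument is the standard consequence of minimality invoked in \cite{dD15}. The only point deserving care is the passage from the vanishing of the rational differentials to their vanishing over $\mathbb{Z}$, which is immediate precisely because the cellular chain groups of a minimal model are free; everything else is the definition of minimality combined with elementary linear algebra over $\mathbb{Q}$.
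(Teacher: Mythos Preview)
Your proposal is correct and follows exactly the route the paper indicates: it simply invokes the minimality obtained from Theorem~\ref{main} together with Theorem~\ref{BASEMENT} and then cites the argument of \cite[Corollary~14]{dD15}. You have in fact spelled out that standard argument (vanishing of the cellular differentials in a minimal model via a rank count) in more detail than the paper does, but the approach is the same.
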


\begin{proof}[Proof of Theorem \ref{main}]
\textit{Step 1.} Up to permuting coordinates in $(\mathbb{C}^{\ast})^{n}$ we can assume that the first $m$ columns of the matrix $\tilde{M}_{\mathcal{A}}$ form a $m\times m$ minor $M_{\mathcal{A}}$ of maximal rank.
Combining Gaussian elimination and Euclidean integer division, \cite[Theorem II.2]{New72} ensures us that there exist a unimodular matrix $H_{\mathcal{A}}$ such that $M_{\mathcal{A}}H_{\mathcal{A}}$ is an upper triangular matrix.
Since $H_{\mathcal{A}}\in M_{m,m}(\mathbb{Z})$ is unimodular, from Note \ref{UNIMOD} we know that its inverse  $K_{\mathcal{A}}$ still has integer entries. Let us denote the entries of $H_{\mathcal{A}}$ and $K_{\mathcal{A}}$ by $h_{i,j}$ and $k_{i,j}$, respectively. Here, $i$ stands for the row index and $j$ for the column index.

\noindent \textit{Step 2.} Let $z_{1},\ldots,z_{n}$ be coordinates in
$(\mathbb{C}^{\ast})^{n}$. For $1\leq i\leq n$ set
\begin{equation}\label{E1}
 t_{i}=
 \left\lbrace
 \begin{array}{lll}
  z_{1}^{k_{i,1}}\cdots z_{m}^{k_{i,m}} & \text{if} & 1\leq i\leq m \\
  z_{i}                                           & \text{if} & m+1\leq i\leq n \\
 \end{array}
 \right.
\end{equation}
This defines a change of coordinates $(z_{1},\ldots,z_{n})\mapsto(t_{1},\ldots,t_{n})$ in $(\mathbb{C}^{\ast})^{n}$ with inverse given componentwise by
\begin{equation*}
 z_{i}=
\left\lbrace
\begin{array}{lll}
 t_{1}^{h_{i,1}}\cdots t_{m}^{h_{i,m}} & \text{if} & 1\leq i\leq m \\
 t_{i}                                           & \text{if} & m+1\leq i \leq n \\
\end{array}
\right.
\end{equation*}
The change of coordinates defined in \eqref{E1} transforms the given toric arrangement
$\mathcal{A}=\{H_{1},\ldots,H_{m}\}$ with subtori
\begin{equation*}
 H_{i}=\left\lbrace
        (z_{1},\ldots,z_{n})\in(\mathbb{C}^{\ast})^{n}\left|
                                                       z_{1}^{p_{i,1}}\cdots z_{n}^{p_{i,n}}=\alpha_{i}
                                                      \right.
       \right\rbrace
\end{equation*}
into the toric arrangement $\mathcal{C}=\{K_{1},\ldots,K_{m}\}$ with subtori
\begin{equation*}
 K_{i}=\left\lbrace
        (t_{1},\ldots,t_{n})\in(\mathbb{C}^{\ast})^{n}
        \left|
        t_{i}^{d_{i}}t_{i+1}^{a_{i,i+1}}\cdots t_{m}^{a_{i,m}}t_{m+1}^{p_{i,m+1}}\cdots t_{n}^{p_{i,n}}=\alpha_{i}
       \right.
       \right\rbrace
\end{equation*}
To see this, let $(z_{1},\ldots,z_{n})$ be a point of $(\mathbb{C}^{\ast})^{n}$ that belongs to the subtorus $H_{i}$. Therefore, its coordinates fulfill condition
\begin{equation*}
 z_{1}^{p_{i,1}}\cdots z_{n}^{p_{i,n}}=\alpha_{i}
\end{equation*}
Exploiting the change of coordinates given by \eqref{E1} the previous condition is then equivalent to
\begin{equation*}
 t_{i}^{d_{i}}t_{i+1}^{a_{i,i+1}}\cdots t_{m}^{a_{i,m}}t_{m+1}^{p_{i,m+1}}\cdots t_{n}^{p_{i,n}}=\alpha_{i}
\end{equation*}
where $a_{i,s}=p_{i,1}k_{1,s}+\ldots+p_{i,m}k_{m,s}$ for $1\leq i\leq m$ and $i+1\leq s\leq m$. Indeed, since from the first step of our proof
\begin{equation*}
 M_{\mathcal{A}}H_{\mathcal{A}}=\left(
                         \begin{array}{ccccc}
                          d_{1} & a_{1,2} & \cdots & \cdots & a_{1,m} \\
                          \quad & d_{2}   & a_{2,3} & \cdots & a_{2,m} \\
                          \quad & \quad & \ddots  & \quad & \vdots    \\
                          \quad & \quad & \quad & d_{m-1} & a_{m-1,m} \\
                          \quad & \quad & \quad & \quad & d_{m}       \\
                         \end{array}
                        \right)
\end{equation*}
is an upper triangular matrix, we have
\begin{equation*}
 \begin{array}{ll}
  z_{1}^{p_{i,1}}\cdots z_{n}^{p_{i,n}}=\alpha_{i} & \Longleftrightarrow \\
  \left(t_{1}^{h_{1,1}}\cdots t_{m}^{h_{1,m}}\right)^{p_{i,1}}\cdots\left(t_{1}^{h_{m,1}}\cdots t_{m}^{h_{m,m}}\right)^{p_{i,m}}t_{m+1}^{p_{i,m+1}}\cdots t_{n}^{p_{i,n}}=\alpha_{i} & \Longleftrightarrow \\
  t_{1}^{p_{i,1}h_{1,1}+\ldots+p_{i,m}h_{m,1}}\cdots t_{m}^{p_{i,1}h_{1,m}+\ldots+p_{i,m}h_{m,m}} t_{m+1}^{p_{i,m+1}}\cdots t_{n}^{p_{i,n}}=\alpha_{i} & \Longleftrightarrow \\
  t_{1}^{(M_{\mathcal{A}})_{i}(H_{\mathcal{A}})^{1}}\cdots
  t_{m}^{(M_{\mathcal{A}})_{i}(H_{\mathcal{A}})^{m}}
  t_{m+1}^{p_{i,m+1}}\cdots t_{n}^{p_{i,n}}=\alpha_{i} & \Longleftrightarrow \\
  t_{i}^{d_{i}}t_{i+1}^{a_{i,i+1}}\cdots t_{m}^{a_{i,m}}t_{m+1}^{p_{i,m+1}}\cdots t_{n}^{p_{i,n}}=\alpha_{i} & \quad \\
 \end{array}
\end{equation*}
Here $(M_{\mathcal{A}})_{i}$ and
$(H_{\mathcal{A}})^{l}$ stand for the $i\text{-th}$ row and the $l\text{-th}$ column of the matrices
$M_{\mathcal{A}}$ and $H_{\mathcal{A}}$, respectively.

\noindent \textit{Step3.} Let us consider the toric arrangement $\mathcal{C}=\{K_{1},\ldots,K_{m}\}$ and let $t_{1},\ldots,t_{m}$ be coordinates in $(\mathbb{C}^{\ast})^{n}$. For $1\leq i\leq n$ set
\begin{equation}\label{E2}
    t_{(1)i}=\left\lbrace
              \begin{array}{lll}
               t_{i} & \text{if} & i\neq m \\
               t_{m}\gamma_{m}^{-1} & \text{if} & i=m \\
              \end{array}
             \right.
\end{equation}
where $\gamma_{m}$ is a $d_{m}\text{-th}$ root of $\alpha_{m}$. Thus,
$(t_{1},\ldots,t_{n})\mapsto(t_{(1)1},\ldots,t_{(1)n})$ defines a change of coordinates in $(\mathbb{C}^{\ast})^{n}$ with inverse given componentwise by
\begin{equation*}
    t_{i}=\left\lbrace
              \begin{array}{lll}
               t_{(1)i} & \text{if} & i\neq m \\
               t_{(1)m}\gamma_{m} & \text{if} & i=m \\
              \end{array}
             \right.
\end{equation*}
that transforms the toric arrangement $\mathcal{C}=\{K_{1},\ldots,K_{m}\}$ into the toric arrangement $\mathcal{C}_{(1)}=\{K_{(1)1},\ldots,K_{(1)m}\}$ where $K_{(1)i}$ are subtori of the form
\begin{equation*}
     K_{(1)i}=
       \left\lbrace
        t_{(1)i}^{d_{i}}t_{(1)i+1}^{a_{i,i+1}}\cdots t_{(1)m}^{a_{i,m}}t_{(1)m+1}^{p_{i,m+1}}\cdots t_{(1)n}^{p_{i,n}}=\alpha_{(1)i}
       \right\rbrace
\end{equation*}
with
\begin{equation*}
    \alpha_{(1)i}=\left\lbrace
                    \begin{array}{lll}
                     \alpha_{i}\gamma_{m}^{-a_{i,m}} & \text{if} & 1\leq i\leq m-1 \\
                     1 & \text{if} & i=m \\
                    \end{array}
                  \right.
\end{equation*}
In order to prove this, let $(t_{1},\ldots,t_{n})$ be a point of $(\mathbb{C}^{\ast})^{n}$ that belongs to the subtorus $K_{i}$. Then, its coordinates satisfy condition
\begin{equation*}
     t_{i}^{d_{i}}t_{i+1}^{a_{i,i+1}}\cdots t_{m}^{a_{i,m}}t_{m+1}^{p_{i,m+1}}\cdots t_{n}^{p_{i,n}}=\alpha_{i}
\end{equation*}
Thanks to the change of coordinates defined by \eqref{E2} this is equivalent to
\begin{equation*}
        t_{(1)i}^{d_{i}}t_{(1)i+1}^{a_{i,i+1}}\cdots t_{(1)m}^{a_{i,m}}t_{(1)m+1}^{p_{i,m+1}}\cdots t_{(1)n}^{p_{i,n}}=\alpha_{(1)i}
\end{equation*}
To see this we have to distinguish between two cases. If $1\leq i\leq m-1$ we have
\begin{equation*}
 \begin{array}{ll}
          t_{i}^{d_{i}}t_{i+1}^{a_{i,i+1}}\cdots t_{m}^{a_{i,m}}t_{m+1}^{p_{i,m+1}}\cdots t_{n}^{p_{i,n}}=\alpha_{i} & \Longleftrightarrow \\
          t_{(1)i}^{d_{i}}t_{(1)i+1}^{a_{i,i+1}}\cdots t_{(1)m-1}^{a_{i,m-1}}(t_{(1)m}\gamma_{m})^{a_{i,m}}t_{(1)m+1}^{p_{i,m+1}}\cdots t_{(1)n}^{p_{i,n}}=\alpha_{i} & \Longleftrightarrow \\
          t_{(1)i}^{d_{i}}t_{(1)i+1}^{a_{i,i+1}}\cdots t_{(1)m}^{a_{i,m}}t_{(1)m+1}^{p_{i,m+1}}\cdots t_{(1)n}^{p_{i,n}}=\alpha_{i}\gamma_{m}^{-a_{i,m}} & \Longleftrightarrow \\
          t_{(1)i}^{d_{i}}t_{(1)i+1}^{a_{i,i+1}}\cdots t_{(1)m}^{a_{i,m}}t_{(1)m+1}^{p_{i,m+1}}\cdots t_{(1)n}^{p_{i,n}}=\alpha_{(1)i} & \quad \\
 \end{array}
\end{equation*}
and similarly, if $i=m$ we have
\begin{equation*}
 \begin{array}{ll}
          t_{m}^{d_{m}}t_{m+1}^{p_{i,m+1}}\cdots t_{n}^{p_{i,n}}=\alpha_{m} & \Longleftrightarrow \\
          (t_{(1)m}\gamma_{m})^{d_{m}}t_{(1)m+1}^{p_{i,m+1}}\cdots t_{(1)n}^{p_{i,n}}=\alpha_{m} & \Longleftrightarrow \\
          t_{(1)m}^{d_{m}}t_{(1)m+1}^{p_{i,m+1}}\cdots t_{(1)n}^{p_{i,n}}=\frac{\alpha_{m}}{\alpha_{m}} & \Longleftrightarrow \\
          t_{(1)m}^{d_{m}}t_{(1)m+1}^{p_{i,m+1}}\cdots t_{(1)n}^{p_{i,n}}=1 & \Longleftrightarrow \\
          t_{(1)m}^{d_{m}}t_{(1)m+1}^{p_{i,m+1}}\cdots t_{(1)n}^{p_{i,n}}=\alpha_{(1)m} & \quad \\
 \end{array}
\end{equation*}
Exploiting the triangular structure of the matrix
$M_{\mathcal{A}}H_{\mathcal{A}}$, for $k=2,\ldots,m$ we can recursively build a sequence  $(t_{(k-1)1},\ldots,t_{(k-1)n})\mapsto(t_{(k)1},\ldots,t_{(k)n})$
of changes of coordinates in $(\mathbb{C}^{\ast})^{n}$ defined componentwise by
\begin{equation*}
    t_{(k)i}=\left\lbrace
              \begin{array}{lll}
               t_{(k-1)i} & \text{if} & i\neq m-k+1 \\
               t_{(k-1)m-k+1}\gamma_{m-k+1}^{-1} & \text{if} & i=m-k+1 \\
              \end{array}
             \right.
\end{equation*}
that transform tha toric arrangement $\mathcal{C}_{(k-1)}=\{K_{(k-1)1},\ldots,K_{(k-1)m}\}$ into the toric arrangement $\mathcal{C}_{(k)}=\{K_{(k)1},\ldots,K_{(k)m}\}$ where $K_{(k)i}$ are subtori of the form
\begin{equation*}
     K_{(k)i}=
       \left\lbrace
        t_{(k)i}^{d_{i}}t_{(k)i+1}^{a_{i,i+1}}\cdots t_{(k)m}^{a_{i,m}}t_{(k)m+1}^{p_{i,m+1}}\cdots t_{(k)n}^{p_{i,n}}=\alpha_{(k)i}
       \right\rbrace
\end{equation*}
with
\begin{equation*}
 \alpha_{(k)i}=
 \left\lbrace
 \begin{array}{llll}
  \alpha_{(k-1)i}\gamma_{m-k+1}^{-a_{i,m-k+1}} & \text{if} & 1\leq i\leq m-k+1 \\
  1 & \text{if} & m-k+1\leq i\leq m \\
 \end{array}
 \right.
\end{equation*}
Again, here
$\gamma_{m-k+1}$ is a $d_{m-k+1}\text{-th}$ root of $\alpha_{(k-1)m-k+1}$.

\noindent \textit{Step 4.} The toric arrangement $\mathcal{C}_{(m)}$ is then centered (compare Section \ref{CX}). The sequence of diffeomorphic changes of coordinates in $(\mathbb{C}^{\ast})^{n}$ that map the toric arrangement $\mathcal{A}$ into the toric arrangement $\mathcal{C}_{(m)}$ induces by restriction a diffeomorphism between the complement manifolds $M(\mathcal{A})$ and $M(\mathcal{C}_{(m)})$.
Hence, placing $T_{i}=K_{(m)i}$ as $1\leq i\leq m$ and setting
$\mathcal{B}=\{T_{1},\ldots,T_{m}\}$, our statement follows.

\begin{note}
 The diffeomorphisms $(t_{(k-1)1},\ldots,t_{(k-1)n})\mapsto(t_{(k)1},\ldots,t_{(k)1})$ defined in the third step of the proof of Theorem \ref{main} are not canonical. Indeed, they depend on the recursive choice of the complex roots $\gamma_{m},\ldots,\gamma_{1}$. However, the diffeomorphism type of the complement manifold $M(\mathcal{A})$ is uniquely determined.
\end{note}

\end{proof}

%

%

%inserire ringraziamenti
\smallskip
\noindent \textbf{Acknowledgments.} The author would like to mention
Farhad Babaee for suggesting the idea behind the third step of Theorem \ref{main} during a conversation that took place at the University of Fribourg (CH) in January, 2016. Moreover, the author would express his gratitude to Emanuele Delucchi and Roberto Pagaria for some enlightening discussions regarding the basic properties of toric arrangements as well as to William C. Jagy and Matthias Lenz for pointing out the bibliographical references \cite{New72} and \cite{L17}, respectively. The author would also like to express a special thanks to Alberto Cavallo for the very helpful support provided throughout
all the time this project has been developed and to Isacco Saini for some tips concerning the title of the paper.
Finally the author would like to thank the anonymous referees for their patient work and their very
useful suggestions.

%\printbibliography
\bibliographystyle{amsalpha}

\bibliography{Full_revised}

@PREAMBLE{
"\def\cprime{$'$} "
}

@BOOK{L24,
  title = {L'Analysis situs et la g\'{e}om\'{e}trie alg\'{e}brique},
  publisher = {Gauthier-Villars, Paris},
  year = {1924},
  author = {Lefschetz, Solomon},
}

@article {D71,
    AUTHOR = {Deligne, Pierre},
     TITLE = {Th\'{e}orie de {H}odge. {I}{I}},
   JOURNAL = {Inst. Hautes \'{E}tudes Sci. Publ. Math},
    VOLUME = {40},
      YEAR = {1971},
     PAGES = {5--57},
}

@incollection {L93,
    AUTHOR = {Looijenga, Eduard},
     TITLE = {Cohomology of $\mathcal{M}_{3}$ and $\mathcal{M}_{3}^{1}$},
 BOOKTITLE = {Mapping class groups and moduli spaces of {R}iemann surfaces ({G}\"{o}ttingen, 1991/{S}eattle, {W}{A}, 1991)},
    SERIES = {Contemporary {M}athematics},
    VOLUME = {150},
     PAGES = {205--228},
 PUBLISHER = {American {M}athematical {S}ociety, Providence {R}{I}},
      YEAR = {1993},
}

@article {DP10,
    AUTHOR = {De Concini, Corrado and Procesi, Claudio},
     TITLE = {On the geometry of toric arrangements},
   JOURNAL = {Transformation Groups},
    VOLUME = {10},
    NUMBER = {3},
      YEAR = {2005},
     PAGES = {387--422},
}

@book {DP11,
    AUTHOR = {De Concini, Corrado and Procesi, Claudio},
     TITLE = {Topics in hyperplane arrangements, polytopes and box-splines},
 PUBLISHER = {Springer, New York},
      YEAR = {2011},
}

@article {CD17,
    AUTHOR = {Callegaro, Filippo and Delucchi, Emanuele},
     TITLE = {The integer cohomology algebra of toric arrangements},
   JOURNAL = {Advances in Mathematics},
    VOLUME = {313},
      YEAR = {2017},
     PAGES = {746--802},
}

@article {MS11,
    AUTHOR = {Moci, Luca and Settepanella, Simona},
     TITLE = {The homotopy type of toric arrangements},
   JOURNAL = {Journal of Pure and Applied Algebra},
   VOLUME  = {215},
    NUMBER = {8},
      YEAR = {2011},
     PAGES = {1980--1989},
}

@article {dD12,
    AUTHOR = {d'Antonio, Giacomo and Delucchi, Emanuele},
     TITLE = {A {S}alvetti complex for toric arrangements and its fundamental group},
   JOURNAL = {Int. Math. Res. Not. IMRN},
    VOLUME = {15},
      YEAR = {2012},
     PAGES = {3535--3566},
}

@article {dD15,
    AUTHOR = {d'Antonio, Giacomo and Delucchi, Emanuele},
     TITLE = {Minimality of toric arrangements},
   JOURNAL = {Journal of the European Mathematical Society},
    VOLUME = {17},
    NUMBER = {3},
      YEAR = {2015},
     PAGES = {483--521},
}

@article {BM14,
    AUTHOR = {Br\"{a}nd\'{e}n, Petter and Moci, Luca},
     TITLE = {The multivariate arithmetic {T}utte polynomial},
   JOURNAL = {Trans. Amer. Math. Soc.},
   VOLUME  = {366},
    NUMBER = {10},
      YEAR = {2014},
     PAGES = {5523--5540},
}

@article {DM13,
    AUTHOR = {D'Adderio, Michele and Moci, Luca},
     TITLE = {Arithmetic matroids, the {T}utte polynomial and toric arrangements},
   JOURNAL = {Advances in Mathematics},
    VOLUME = {232},
    NUMBER = {1},
      YEAR = {2013},
     PAGES = {335--367},
}

@article {L17,
    AUTHOR = {Lenz, Matthias},
     TITLE = {Representations of weakly multiplicative arithmetic matroids are unique},
   JOURNAL = {Annals of Combinatorics},
    VOLUME = {23},
      YEAR = {2019},
     PAGES = {335--346},
}

@article {M12,
    AUTHOR = {Moci, Luca},
     TITLE = {A {T}utte polynomial for toric arrangements},
   JOURNAL = {Trans. Amer. Math. Soc.},
    VOLUME = {364},
    NUMBER = {2},
      YEAR = {2012},
     PAGES = {1067--1088},
}

@article {Pag19,
    AUTHOR = {Pagaria, Roberto},
     TITLE = {Two examples of toric arrangements},
   JOURNAL = {Journal of Combinatorial Theory, Series A},
    VOLUME = {167},
      YEAR = {2019},
     PAGES = {389--402},
}

@article {DCG21,
    AUTHOR = {De Concini, Corrado and Gaiffi, Giovanni},
     TITLE = {A differential algebra and the homotopy type of the complement of a toric arrangement},
   JOURNAL = {Rendiconti Lincei - Matematica e Applicazioni},
    VOLUME = {32},
    NUMBER = {1},
      YEAR = {2021},
     PAGES = {1--21},
}

@book {New72,
    AUTHOR = {Newman, Morris},
     TITLE = {Integral {M}atrices},
   EDITION = {First},
 PUBLISHER = {Elsevier},
      YEAR = {1972},
}

@article {Ran02,
    AUTHOR = {Randell, Richard},
     TITLE = {Morse theory, {M}ilnor fibers and minimality of toric 
              arrangements},
   JOURNAL = {Proceedings of the american mathematical society},
    VOLUME = {130},
    NUMBER = {9},
      YEAR = {2002},
     PAGES = {2737--2743},
}

@article {DP03,
    AUTHOR = {Dimca, Alexandru and Papadima, Stefan},
     TITLE = {Hypersurface complements, milnor fibers and higher homotopy groups of arrangements},
   JOURNAL = {Annals of mathematics},
    SERIES = {Second Series},
    VOLUME = {158},
    NUMBER = {2},
      YEAR = {2003},
     PAGES = {473--507},
}

\end{document}